\documentclass[11pt,reqno]{amsart}
\setlength{\hoffset}{-.5in} \setlength{\voffset}{-.25in}
\usepackage{amssymb,latexsym}
\usepackage{graphicx}
\usepackage{fancyhdr,amssymb}
\usepackage{url}        

\textwidth=6.175in \textheight=8.5in

\theoremstyle{plain} \numberwithin{equation}{section}
\newtheorem{thm}{Theorem}[section]
\newtheorem{theorem}[thm]{Theorem}
\newtheorem{lemma}[thm]{Lemma}
\newtheorem{corollary}[thm]{Corollary}

\pagestyle{fancy}

\begin{document}

\fancyhead{}
\renewcommand{\headrulewidth}{0pt}

\setcounter{page}{1}

\title[Arithmetic Functions of Balancing Numbers]{Arithmetic Functions of Balancing Numbers}
\author{Manasi Kumari Sahukar}
\address{Department of Mathematics\\
                National Institute of Technology\\
                Rourkela, Odisha\\
                India}
\email{manasi.sahukar@gmail.com}

\author{G.K.Panda}
\address{Department of Mathematics\\
               National Institute of Technology\\
                Rourkela, Odisha\\
                India}
\email{gkpanda\_nit@rediffmail.com}

\begin{abstract}
Two inequalities involving the Euler totient function and the sum of the $k$-th powers of the divisors of balancing numbers are explored.
\end{abstract}

\maketitle
\textbf{Key words:} Balancing numbers, Euler totient function, Arithmetic functions\\

\maketitle 
\textbf{2010 Subject classification [A.M.S.]:} 11B39, 11A25.

\section{introduction}
For any positive integer $n$, the Euler totient function $\phi(n)$ is defined as number of positive integers less than $n$ and relatively prime to $n$, and $\sigma_{k}(n)$ denote the sum of the $k$-th power of divisors of $n$. If $k=0$, $\sigma_{k}(n)$ reduces to the function $\tau(n)$, which counts the number of positive divisors of $n$. For many centuries, mathematicians were more concerned on the arithmetic functions of natural numbers and solved many Diophantine equations concerning these functions. Subsequently, some researchers focus their attention on study of arithmetic functions relating to binary recurrence sequences such as Fibonacci sequence, Lucas sequence, Pell sequence and associated Pell sequence.

In 1997, Luca \cite{Luca 1997} showed that the Euler totient function for the homogeneous binary recurrence sequences $\{u_n\}_{n\geq0}$ satisfy the inequality $\phi(|u_n|) \geq |u_{\phi(n)}|$ for those binary recurrences with characteristic equations having real roots and the inequality is not valid for those recurrences with characteristic equations having complex roots. In \cite{Luca 1998}, he  proved that the $n$-th Fibonacci number satisfies $\sigma_k(F_n) \leq F_{\sigma_k(n)}$ and $\tau(F_n) \geq F_{\tau(n)}$ for all $n \geq 1$. Motivated by these works, we study two similar inequalities involving arithmetic functions of balancing numbers.

Recall that a natural number $B$ is a balancing number with balancer $R$ if the pair $(B,R)$ satisfies the Diophantine equation $1+2+\cdots+(B-1)=(B+1)+\cdots+(B+R)$. If $B$ is a balancing number then $8B^2+1$ is a perfect square and its positive square root is called a Lucas balancing number. The $n$-th balancing number is denoted by $B_n$ while the $n$-th Lucas-balancing number is denoted by $C_n$. The balancing numbers satisfy the binary recurrence $B_{n+1}=6B_n-B_{n-1}$, $B_0=0$, $B_1=1$ which holds for $n\geq1$, while the Lucas-balancing numbers satisfy a binary recurrence identical with that of balancing numbers, however with initial values $C_0=1$, $C_1=3$. The characteristic equation of these recurrences is given by $x^2-6x+1=0$ whose roots are $\alpha=3+2\sqrt{2}$ and $\beta=3-2\sqrt{2}$. The Binet forms of balancing and Lucas-balancing numbers are given by 
$$B_n=\frac{\alpha^n-\beta^n}{\alpha-\beta}, C_n=\frac{\alpha^n+\beta^n}{2}$$ (see \cite{Behera 1999,Ray 2009}).

Given a natural number $A>2$, the sequence arising out of the class of binary recurrence $x_{n+1}=Ax_n-x_{n-1}$ with initial terms  $x_0=0$, $x_1=1$ is known as a balancing-like sequence because the case $A=6$ corresponds to balancing sequence \cite{Rout 2012}. It is interesting to note that when $A=2$, the above recurrence relation generates the sequence of natural numbers. Further, when $A=3$, the corresponding balancing-like sequence coincides with the sequence of even indexed Fibonacci numbers. The balancing-like sequences (and hence the balancing sequence) satisfy certain identities in which they behave like natural numbers \cite{Panda 2009, Rout 2012} and hence these sequences are considered as generalization of the sequence of natural numbers.

\section{Auxiliary results}

To establish the inequalities concerning arithmetic functions of balancing numbers, we need the following results. Some results of this section are new and hence we provide proofs of such results.

The following lemma presents some basic properties of balancing numbers. 


\begin{lemma}\label{2.1}{\rm{(\cite{Panda 2009}, Theorem 2.5, \cite{Ray 2009}, Theorem 5.2.6)}}
If $m$ and $n$ are natural numbers then 
\begin{enumerate}
\item $B_{m+n}=B_mC_{n}+C_{m}B_n$.\\
\item $5^{n-1}<B_n<6^{n-1}$ for $n\geq3$.
\end{enumerate}
\end{lemma}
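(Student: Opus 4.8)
The plan is to treat the two parts independently: part (1) is an identity that falls out of the Binet forms, while part (2) is an estimate best handled by induction on the defining recurrence $B_{n+1}=6B_n-B_{n-1}$.

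For part (1), I would substitute the Binet representations $B_n=(\alpha^n-\beta^n)/(\alpha-\beta)$ and $C_n=(\alpha^n+\beta^n)/2$ directly into the right-hand side $B_mC_n+C_mB_n$. Placing both products over the common denominator $2(\alpha-\beta)$ and expanding $(\alpha^m-\beta^m)(\alpha^n+\beta^n)$ together with $(\alpha^m+\beta^m)(\alpha^n-\beta^n)$, the mixed terms $\alpha^m\beta^n$ and $\alpha^n\beta^m$ cancel when the two numerators are added, leaving $2\alpha^{m+n}-2\beta^{m+n}$. Dividing by $2(\alpha-\beta)$ gives exactly $(\alpha^{m+n}-\beta^{m+n})/(\alpha-\beta)=B_{m+n}$. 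This is purely formal and does not even require the relation $\alpha\beta=1$; it is a routine calculation once the Binet shapes are in place.

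For part (2), I would argue by induction using $B_{n+1}=6B_n-B_{n-1}$. The upper bound is immediate: since $B_{n-1}>0$ we have $B_{n+1}<6B_n$, so the hypothesis $B_n<6^{n-1}$ yields $B_{n+1}<6^n$, and the base case $B_3=35<36=6^2$ closes the induction. The lower bound is the only delicate point, because the subtracted term $B_{n-1}$ blocks a naive induction of the form $B_{n+1}=6B_n-B_{n-1}>5^n$. To get around this I would first record that $\{B_n\}$ is strictly increasing: from $B_{n+1}-B_n=5B_n-B_{n-1}$ and the inductive hypothesis $B_n>B_{n-1}>0$ one gets $B_{n+1}-B_n>4B_{n-1}>0$, with base case $B_2=6>B_1=1$.

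Monotonicity then converts the additive recurrence into a clean multiplicative estimate: writing $B_n=6B_{n-1}-B_{n-2}$ and using $B_{n-2}<B_{n-1}$ gives $B_n>5B_{n-1}$ for every $n\geq2$. Iterating this bound down to $B_1=1$ produces $B_n>5^{n-1}B_1=5^{n-1}$, which is precisely the lower bound. The main obstacle, therefore, is exactly this conversion of the recurrence with its negative term into the ratio inequality $B_n>5B_{n-1}$; once that is established both bounds follow by routine induction. I would also note that the restriction $n\geq3$ is needed only to make the upper bound strict, since $B_1=6^0$ and $B_2=6^1$ give equality there.
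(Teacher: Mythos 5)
Your proof is correct, but there is nothing in the paper to compare it against: the paper states Lemma \ref{2.1} purely as a citation (to Panda, Congr.\ Numerantium 2009, Theorem 2.5, and to Ray's thesis, Theorem 5.2.6) and gives no proof of its own. Your two arguments are the natural ones and both check out. For part (1), the Binet expansion is exactly right: the cross terms $\alpha^m\beta^n$ and $\alpha^n\beta^m$ cancel upon adding the two numerators, and your observation that $\alpha\beta=1$ is never used is accurate --- the identity is the hyperbolic-sine addition formula in disguise and holds for any pair of sequences of those Binet shapes. For part (2), you correctly identify the only delicate point, namely that the negative term in $B_{n+1}=6B_n-B_{n-1}$ obstructs a naive induction on $B_n>5^{n-1}$, and your fix via the ratio bound $B_n>5B_{n-1}$ (valid for all $n\geq 2$, including $n=2$ since $B_0=0<B_1$) is sound; iterating down to $B_1=1$ gives the strict lower bound, and the upper bound $B_{n+1}<6B_n$ with base $B_3=35<36$ is immediate. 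One small streamlining: the separate monotonicity induction is dispensable, since $B_n>5B_{n-1}$ can itself be proved by a single induction --- if $B_n>5B_{n-1}$ then $B_{n-1}<B_n$, so $B_{n+1}=6B_n-B_{n-1}>5B_n$ --- which folds your two inductions into one. Your closing remark on the role of $n\geq 3$ is also correct: $B_1=6^0$ and $B_2=6^1$ give equality in the upper bound, while the lower bound is already strict from $n=2$ on.
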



The following two lemmas deal with the divisibility property of balancing numbers.
\begin{lemma}\label{2.2}{\rm{(\cite{Panda 2009}, Theorem 2.8)}}
If $m$ and $n$ are natural numbers then $B_m$ divides $B_n$ if and only if $m$ divides $n$.
\end{lemma}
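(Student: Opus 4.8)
The plan is to prove the two implications separately, with the forward direction $m \mid n \Rightarrow B_m \mid B_n$ being a routine induction and the converse being the substantive part. For the forward direction, I would write $n = mk$ and induct on $k$. The base case $k=1$ is trivial. For the inductive step, the addition formula of Lemma~\ref{2.1}(1) gives
$$B_{m(k+1)} = B_{mk+m} = B_{mk}C_m + C_{mk}B_m,$$
where the first summand is divisible by $B_m$ by the inductive hypothesis and the second carries the explicit factor $B_m$; hence $B_m \mid B_{m(k+1)}$, completing the induction.

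For the converse, the key algebraic input is the coprimality $\gcd(B_m, C_m) = 1$. This follows from the defining relation $C_m^2 = 8B_m^2 + 1$, equivalently $C_m^2 - 8B_m^2 = 1$, since any common divisor of $B_m$ and $C_m$ must divide $1$. I would then carry out a division-with-remainder reduction: writing $n = qm + r$ with $0 \le r < m$, the addition formula gives $B_{j+m} = B_jC_m + C_jB_m \equiv B_jC_m \pmod{B_m}$ for every $j$, and iterating this congruence $q$ times yields $B_n \equiv B_r\,C_m^{\,q} \pmod{B_m}$.

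Because $\gcd(C_m, B_m) = 1$, the factor $C_m^{\,q}$ is invertible modulo $B_m$, so $B_m \mid B_n$ if and only if $B_m \mid B_r$. Since the balancing sequence is strictly increasing with $B_0 = 0$, the remainder satisfies $0 \le B_r < B_m$, and therefore $B_m \mid B_r$ forces $B_r = 0$, that is $r = 0$ and $m \mid n$. The main obstacle is setting up the reduction so that the $C_m$-factors accumulate correctly and establishing the congruence $B_n \equiv B_r\,C_m^{\,q} \pmod{B_m}$ cleanly; once the coprimality $\gcd(B_m, C_m) = 1$ and the strict monotonicity of $\{B_n\}$ are in hand, the remaining argument is bookkeeping.
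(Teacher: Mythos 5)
The paper itself gives no proof of this lemma --- it is quoted from \cite{Panda 2009} (Theorem 2.8) --- but your argument is correct, complete, and is essentially the standard proof of this divisibility property: induction via the addition formula $B_{mk+m}=B_{mk}C_m+C_{mk}B_m$ for the forward direction, and for the converse, reduction of the index modulo $m$ using the congruence $B_{r+qm}\equiv B_rC_m^{\,q}\pmod{B_m}$, the coprimality $\gcd(B_m,C_m)=1$ forced by $C_m^2-8B_m^2=1$, and strict monotonicity of the sequence to conclude $B_r=0$. The only point worth a remark is trivial: when $r=0$ the converse is already settled, so every index $j$ appearing in your iterated congruence can be taken positive, keeping you safely within the stated range of the addition formula in Lemma \ref{2.1}.
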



\begin{lemma}\label{2.3}{\rm{(\cite{Panda 2009}, Theorem 2.13)}}
If $m$ and $n$ are natural numbers then $(B_m,B_n)=B_{(m,n)}$, where $(x,y)$ denotes the greatest common divisor of $x$ and $y$.
\end{lemma}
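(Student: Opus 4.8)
The plan is to imitate the Euclidean algorithm at the level of indices, converting a gcd of balancing numbers with indices $m$ and $n$ into a gcd with the smaller indices $n$ and $m \bmod n$, until the process terminates. The engine that drives the reduction is the addition formula of Lemma \ref{2.1}(1) together with the coprimality of $B_k$ and $C_k$ for every $k$. This coprimality follows at once from the square-root characterization recorded in the introduction: since $8B_k^2+1=C_k^2$, that is $C_k^2-8B_k^2=1$, any common divisor of $B_k$ and $C_k$ divides $1$, so $(B_k,C_k)=1$.

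First I would carry out one reduction step. Writing $m=qn+r$ with $0\le r<n$, the addition formula gives $B_m=B_{qn}C_r+C_{qn}B_r$. Since $n\mid qn$, Lemma \ref{2.2} yields $B_n\mid B_{qn}$, so working modulo $B_n$ we obtain $B_m\equiv C_{qn}B_r\pmod{B_n}$, whence $(B_m,B_n)=(C_{qn}B_r,B_n)$. The key claim is that $(C_{qn},B_n)=1$: any common divisor of $C_{qn}$ and $B_n$ divides $C_{qn}$ and, using $B_n\mid B_{qn}$ once more, also divides $B_{qn}$, but $C_{qn}$ and $B_{qn}$ are coprime by the previous paragraph. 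Cancelling the coprime factor $C_{qn}$ gives $(B_m,B_n)=(B_r,B_n)$.

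This identity is precisely one round of the Euclidean algorithm transferred to the subscripts: replacing the pair of indices $(m,n)$ by $(r,n)$ with $r=m\bmod n$ leaves the value $(B_m,B_n)$ unchanged. Iterating the reduction, always taking the larger current index modulo the smaller, the indices run through exactly the remainder sequence of the ordinary Euclidean algorithm applied to $m$ and $n$, terminating when the remainder first becomes $0$. At that final stage the surviving index is $(m,n)$ while its companion is $0$, so the gcd equals $(B_{(m,n)},B_0)=(B_{(m,n)},0)=B_{(m,n)}$, which is the asserted conclusion.

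The only delicate point, and the place I expect to spend the most care, is the coprimality $(C_{qn},B_n)=1$; the remainder of the argument is bookkeeping on the Euclidean recursion. Should one wish to avoid the identity $C_k^2-8B_k^2=1$, the same coprimality can be obtained from the Binet forms or by a short induction on the recurrence, but invoking the square-root characterization of Lucas-balancing numbers from the introduction is the cleanest route.
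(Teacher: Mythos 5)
Your proof is correct. Note that the paper itself gives no proof of this lemma: it is quoted with a citation to Panda's 2009 paper (Theorem 2.13), so there is no in-paper argument to compare against. Your argument is the classical Euclidean-algorithm proof, the same route used for the analogous Fibonacci identity $(F_m,F_n)=F_{(m,n)}$: the reduction $(B_m,B_n)=(B_{m\bmod n},B_n)$ via the addition formula of Lemma \ref{2.1}(1) and the divisibility of Lemma \ref{2.2}, with the crucial coprimality $(C_{qn},B_n)=1$ correctly deduced from $C_k^2-8B_k^2=1$ (which is exactly the square-root characterization stated in the paper's introduction) together with $B_n\mid B_{qn}$. All edge cases are handled or harmless: when the remainder is $0$ the conventions $B_0=0$, $C_0=1$, and $(x,0)=x$ make the termination step valid, and the implicit assumption $m\ge n$ costs nothing by symmetry. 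In short, the proposal is a complete, self-contained proof that makes the paper's black-box citation unnecessary for this lemma.
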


Given any two nonzero integers $A$ and $B$, we consider the second order linear recurrence sequence $\{w_n\}_{n\geq0}$  defined by $w_{n+1}=Aw_n+Bw_{n-1}$ with initial terms $w_0=0$ and $w_1=1$. If $A^2+4B>0$ then the characteristic equation $x^2-Ax-B=0$ has distinct real  roots $\alpha=\frac{A+\sqrt{A^2+4B}}{2}$, $\beta=\frac{A-\sqrt{A^2+4B}}{2}$ and the Binet form is given by $w_n=\frac{\alpha^n-\beta^n}{\alpha-\beta}$. A prime $p$ is called as primitive divisor of $w_n$ if $p$ divides $w_n$ but does not divide $w_m$ for $0<m<n$. 

The following two lemmas deal with the existence of primitive divisors of the sequence $\{w_n\}_{n\geq0}$ described in the last paragraph and the balancing sequence $\{B_n\}_{n\geq0}$.


\begin{lemma}\label{2.4}{\rm{(\cite{Yabuta 2001}, Theorem 1)}}.
If the roots $\alpha$ and $\beta$ are real and $n\neq 1,2,6,12,$ then $w_n$ contains at least one primitive divisor.
 \end{lemma}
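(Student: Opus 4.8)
The plan is to prove this through the classical homogeneous cyclotomic factorization of the sequence, the natural self-contained route to Carmichael's primitive-divisor theorem for real-rooted Lucas sequences; since $\alpha,\beta$ are real, distinct, and (as $A\neq0$) of unequal absolute value, I may assume $|\alpha|>|\beta|$, so $\alpha$ is the dominant root. First I would introduce, for each $d\geq1$, the homogenized cyclotomic value
$$\Psi_d=\prod_{\substack{1\le k\le d\\ \gcd(k,d)=1}}\bigl(\alpha-\zeta_d^{k}\beta\bigr),\qquad \zeta_d=e^{2\pi i/d},$$
and record the factorization $\alpha^n-\beta^n=\prod_{d\mid n}\Psi_d$, which follows from $\prod_{k=0}^{n-1}(\alpha-\zeta_n^{k}\beta)=\alpha^n-\beta^n$ by grouping the $n$-th roots of unity according to their exact order. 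For $d>1$ each $\Psi_d$ is fixed by the Galois action swapping $\alpha$ and $\beta$, hence is a rational integer; dividing through by $\Psi_1=\alpha-\beta$ gives $w_n=\prod_{d\mid n,\,d>1}\Psi_d\in\Z$. Since a primitive divisor of $w_n$ is a prime dividing $\Psi_n$ but no earlier $w_m$, the whole problem reduces to exhibiting a prime factor of $\Psi_n$ that is new at stage $n$.

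Second, I would control the non-primitive (``intrinsic'') prime factors of $\Psi_n$. Writing $\rho(p)$ for the rank of apparition of a prime $p$ (the least index with $p\mid w_{\rho(p)}$) and using the standard facts $p\mid w_n\iff\rho(p)\mid n$ together with a lifting-the-exponent computation of $v_p(w_n)$, I would show that a prime dividing $\Psi_n$ can fail to be primitive only when $p\mid n$ and $n/\rho(p)$ is a power of $p$; that, up to finitely many small exceptions, such a prime is unique and equals the largest prime factor $P(n)$ of $n$; and that it then divides $\Psi_n$ only to the first power. Consequently, if $\Psi_n$ carried no primitive divisor, its absolute value would be at most $P(n)\le n$, apart from a few genuinely small configurations that I would list and dispatch by hand.

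Third, I would derive a lower bound that contradicts this for all large $n$. Setting $x=\beta/\alpha$, the palindromic identity $\Phi_n(X)=X^{\phi(n)}\Phi_n(1/X)$ for $n>1$ yields the clean formula $|\Psi_n|=|\alpha|^{\phi(n)}\,\bigl|\Phi_n(x)\bigr|$, where $\Phi_n$ is the ordinary $n$-th cyclotomic polynomial and $|x|<1$ because $\alpha$ dominates; moreover $|\alpha|>1$ since $A,B\neq0$ force $|\alpha\beta|=|B|\ge1$. A primitive divisor then exists as soon as $|\alpha|^{\phi(n)}\bigl|\Phi_n(x)\bigr|>P(n)$. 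I expect the main obstacle to lie exactly in controlling the cyclotomic value $|\Phi_n(x)|$: the crude termwise estimate gives only $|\Phi_n(x)|\ge(1-|x|)^{\phi(n)}$, which collapses the whole bound back to $(|\alpha|-|\beta|)^{\phi(n)}$ and is vacuous in the degenerate Fibonacci-type case $|\alpha|-|\beta|=1$. The remedy is to bound $|\Phi_n(x)|$ below by a positive constant uniform in $n$, using that $\Phi_n$ has no real root for $n\ge3$ (so it is positive on $(-1,1)$) and that $\Phi_n(x)\to1$ as $n\to\infty$ for fixed $|x|<1$; this restores genuine exponential growth from the $|\alpha|^{\phi(n)}$ factor. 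Once the inequality $|\alpha|^{\phi(n)}\bigl|\Phi_n(x)\bigr|>P(n)$ is made effective it holds beyond a small explicit threshold, and direct inspection of the finitely many remaining indices is precisely what leaves behind the exceptions $n=1,2,6,12$.
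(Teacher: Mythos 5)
The paper never proves this lemma at all: it is imported wholesale from Yabuta's Theorem~1 (ultimately Carmichael's theorem), so your attempt has to be measured against that classical argument, and your outline does follow its structure: the factorization $\alpha^n-\beta^n=\prod_{d\mid n}\Psi_d$ into homogenized cyclotomic values, the rank-of-apparition and lifting-the-exponent analysis showing that the only possible non-primitive prime factor of $\Psi_n$ is the largest prime factor $P(n)$ of $n$ and that it divides $\Psi_n$ at most to the first power, and the resulting criterion that a primitive divisor exists once $|\Psi_n|>P(n)$. Your first two steps are correct in outline, though note that they (and the lemma itself) silently require $\gcd(A,B)=1$: for $A=B=2$ the roots $1\pm\sqrt3$ are real, yet $w_4=16$ has no primitive divisor since $2\mid w_2$, so the coprimality hypothesis missing from the paper's statement is genuinely needed.

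The genuine gap is in your third step, and it sits exactly where the difficulty of the theorem is concentrated. The lemma asserts a single exceptional set $\{1,2,6,12\}$ valid for \emph{every} admissible pair $(A,B)$, so the threshold in your final inequality must be uniform in the sequence; your lower bound for $|\Phi_n(x)|$, $x=\beta/\alpha$, is uniform in $n$ but not in $x$. Estimating the Möbius factors of $\Phi_n(x)=\prod_{d\mid n}(x^d-1)^{\mu(n/d)}$ one at a time --- which is all that ``positive on $(-1,1)$ and tending to a limit'' can give --- yields at best a constant like $c(x)=\prod_{d\ge1}(1-|x|^d)(1+|x|^d)^{-1}$, which degenerates as $|x|\to1$. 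And $|x|$ does approach $1$ within the allowed family: for $A=2k+1$, $B=-k(k+1)$ one has $\alpha=k+1$, $\beta=k$, $x=k/(k+1)$, and $c(x)$ is of size $e^{-ck}$, so your criterion $|\alpha|^{\phi(n)}c(x)>P(n)$ only takes effect once $\phi(n)\gg k/\log k$. The ``finitely many remaining indices'' you propose to check by hand therefore form a set that grows without bound as $k$ grows, and no fixed finite verification can leave behind exactly $\{1,2,6,12\}$. (The theorem is still true for these sequences: $|\Psi_n|$ is in fact comparable to $k^{\phi(n)}$, because the Möbius product exhibits massive cancellation; extracting that cancellation uniformly, via the integrality anchors $(\alpha-\beta)^2=A^2+4B\ge1$ and $|\alpha\beta|=|B|\ge1$, is precisely the content of Carmichael's and Yabuta's proofs, and it is the step your sketch waves through.) Two smaller points: the claim $\Phi_n(x)\to1$ for fixed $|x|<1$ is false ($\Phi_p(x)\to(1-x)^{-1}$ along primes; the sequence $\Phi_n(x)$ has no limit, only a positive $x$-dependent lower bound), and for the one sequence this paper actually applies the lemma to --- balancing numbers, where $A=6$, $B=-1$, $|x|=(3-2\sqrt2)^2<0.03$ --- your argument does close with explicit constants, but that establishes a non-uniform special case, not the lemma as stated.
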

 
 
 \begin{lemma}\label{2.5}
 A primitive prime factor of $B_n$ exists if $n>1$.
 \end{lemma}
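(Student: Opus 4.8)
The plan is to recognize the balancing sequence as a member of the real-rooted family $\{w_n\}$ introduced just before Lemma~\ref{2.4}, invoke that lemma wholesale, and then clean up the finitely many excluded indices by hand. Concretely, the recurrence $B_{n+1}=6B_n-B_{n-1}$ is precisely $w_{n+1}=Aw_n+Bw_{n-1}$ with $A=6$ and $B=-1$, and the initial data $B_0=0$, $B_1=1$ match. Since $A^2+4B=36-4=32>0$, the characteristic roots $\alpha=3+2\sqrt2$ and $\beta=3-2\sqrt2$ are real, so Lemma~\ref{2.4} applies to $\{B_n\}$ and produces a primitive prime factor of $B_n$ for every $n$ outside the exceptional set $\{1,2,6,12\}$. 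As the statement only concerns $n>1$, it remains to settle $n=2$, $n=6$, and $n=12$ by explicit computation.

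First I would dispose of $n=2$ and $n=6$ by direct factorization. Because $B_1=1$ has no prime factors, every prime dividing $B_2=6$ is automatically primitive, so $n=2$ is immediate. For $n=6$ I would record $B_6=2\cdot 3^2\cdot 5\cdot 7\cdot 11$ and the factorizations $B_2=6$, $B_3=35$, $B_4=204=2^2\cdot 3\cdot 17$, $B_5=1189=29\cdot 41$; the prime $11$ divides $B_6$ but divides none of $B_1,\dots,B_5$, hence is a primitive factor.

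The case $n=12$ is the main obstacle, since $B_{12}$ is large and one must exhibit a prime factor that genuinely fails to divide any $B_m$ with $m<12$. Here I would use the duplication identity $B_{12}=2B_6C_6$, obtained from Lemma~\ref{2.1}(1) with $m=n=6$, to reduce the problem to factoring $C_6=19601=17\cdot 1153$, which supplies the candidate prime $1153$. To certify that $1153$ is primitive I would run the standard rank-of-apparition argument: let $r$ be the least index with $1153\mid B_r$. Since $1153\mid B_{12}$, Lemma~\ref{2.3} gives $1153\mid\gcd(B_r,B_{12})=B_{(r,12)}$, and minimality of $r$ forces $(r,12)=r$, i.e.\ $r\mid 12$; a quick check that $1153\nmid B_m$ for $m\in\{1,2,3,4,6\}$ (all of which are smaller than $1153$ except $B_6=6930$, and $1153\nmid 6930$) then eliminates every proper divisor of $12$, leaving $r=12$. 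Thus $1153$ divides no earlier balancing number and is primitive. Combining the three verified cases with Lemma~\ref{2.4} establishes the claim for all $n>1$.
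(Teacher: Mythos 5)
Your proposal is correct and follows essentially the same route as the paper: both recognize $\{B_n\}$ as a real-rooted instance of the sequence $\{w_n\}$, invoke Lemma~\ref{2.4} to cover all $n \notin \{1,2,6,12\}$, and then verify the exceptional indices $n=2,6,12$ directly. The only difference is one of detail—the paper simply asserts that $3$, $11$, and $1153$ are primitive divisors of $B_2$, $B_6$, $B_{12}$, whereas you justify this (in particular for $1153$, via the duplication identity $B_{12}=2B_6C_6$ and a rank-of-apparition argument through Lemma~\ref{2.3}), which is a welcome but not structurally different elaboration.
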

 \begin{proof}
 In Section 1, we have seen that the characteristic roots $\alpha=3+2\sqrt{2}$ and $\beta=3-2\sqrt{2}$ corresponding to the binary recurrence of the balancing sequence are real. Hence, by virtue of Lemma \ref{2.4}, $B_n$ has a primitive divisor for all $n\in\mathbb{Z}$ except possibly $n \in\{1,2,6,12\}$. But one can easily check that $B_2=6$, $B_6=6930$ and $B_{12}=271669860$ have primitive divisors 3, 11 and 1153 respectively. 
  \end{proof}
 
\begin{lemma}\label{2.6}{\rm{(\cite{Rout 2014}, Theorem 3.2)}}
If $p$ is a prime of the form $8x\pm1$ then $p$ divides $B_{p-1}$, further if the prime $p$ is of the form $8x\pm3$ then $p$ divides $B_{p+1}$.
\end{lemma}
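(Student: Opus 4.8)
The plan is to reduce the Binet form modulo $p$ through the ring homomorphism $\mathbb{Z}[\sqrt{2}]\to\overline{\mathbb{F}}_p$ sending $\sqrt{2}$ to a fixed square root $s$ of $2$ in an algebraic closure of $\mathbb{F}_p$, exploiting that the two congruence conditions on $p$ are precisely the two values of the Legendre symbol $\left(\frac{2}{p}\right)$. By the second supplement to the law of quadratic reciprocity, $\left(\frac{2}{p}\right)=1$ when $p\equiv\pm1\pmod 8$ and $\left(\frac{2}{p}\right)=-1$ when $p\equiv\pm3\pmod 8$, and since the discriminant of $x^2-6x+1=0$ is $32$ this decides whether $s$ lies in $\mathbb{F}_p$ or generates $\mathbb{F}_{p^2}$. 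Writing $a=3+2s$ and $b=3-2s$ for the images of $\alpha,\beta$, the integer identity $(\alpha-\beta)B_n=\alpha^n-\beta^n$ maps to $(a-b)B_n\equiv a^n-b^n$, with $ab=1$ and $a-b=4s\neq0$ (as $p$ is odd), so $a-b$ is invertible and may be cancelled.

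First suppose $p\equiv\pm1\pmod 8$, so $s\in\mathbb{F}_p$ and $a,b\in\mathbb{F}_p^{\ast}$ (they are nonzero since $ab=1$). Fermat's little theorem gives $a^{p-1}=b^{p-1}=1$, hence $a^{p-1}-b^{p-1}=0$; cancelling $a-b$ in the reduced identity yields $B_{p-1}\equiv0\pmod p$.

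Next suppose $p\equiv\pm3\pmod 8$, so $s\notin\mathbb{F}_p$ and $a,b\in\mathbb{F}_{p^2}$. The key step is a Frobenius computation: because $s^{p}=s\cdot s^{p-1}=s\cdot 2^{(p-1)/2}=\left(\frac{2}{p}\right)s=-s$, the identity $(x+y)^p=x^p+y^p$ in characteristic $p$ gives $a^{p}=3-2s=b$ and likewise $b^{p}=a$. Therefore $a^{p+1}=a^{p}a=ba=ab=1$ and similarly $b^{p+1}=1$, so $a^{p+1}-b^{p+1}=0$ and cancelling $a-b$ gives $B_{p+1}\equiv0\pmod p$.

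The step I expect to demand the most care is setting up the reduction correctly rather than formally: one must check that $\sqrt{2}\mapsto s$ defines a ring homomorphism (it does, since $s^2=2$ in $\overline{\mathbb{F}}_p$), that $a-b=4s$ is genuinely a unit so the cancellation is legitimate, and that a vanishing image in $\overline{\mathbb{F}}_p$ forces the integer $B_{p-1}$ (respectively $B_{p+1}$) to be divisible by $p$. Everything past this setup is the pair of short Fermat and Frobenius computations displayed above.
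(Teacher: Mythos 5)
Your proof is correct, but note that there is nothing in the paper to compare it against: the paper does not prove Lemma \ref{2.6} at all, it simply quotes it as Theorem 3.2 of Rout and Panda (Acta Math.\ Hungar., 2014). Judged on its own merits, your argument is sound and is essentially the classical proof of the rank-of-apparition dichotomy for Lucas sequences: balancing numbers are $U_n(6,1)$ with discriminant $D=32$, and since $\left(\frac{32}{p}\right)=\left(\frac{2}{p}\right)$, the splitting of the characteristic polynomial over $\mathbb{F}_p$ is governed by the second supplement, which is exactly your case division. All the delicate points you flagged do check out: the map $\mathbb{Z}[\sqrt{2}]\to\overline{\mathbb{F}}_p$ is a well-defined ring homomorphism because $\mathbb{Z}[\sqrt{2}]\cong\mathbb{Z}[x]/(x^2-2)$; $a-b=4s$ is nonzero (hence a unit in a field) because $p$ is odd and $s^2=2\not\equiv 0$; $ab=9-4s^2=1$ makes $a,b$ units; and $s^p=2^{(p-1)/2}s=-s$ by Euler's criterion in the inert case, giving $a^p=b$, $b^p=a$, and $a^{p+1}=b^{p+1}=ab=1$. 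Since the image of the integer $B_{p\mp 1}$ under the reduction map is its residue class modulo $p$, its vanishing does force $p\mid B_{p\mp 1}$. What your write-up buys is self-containedness: the paper treats the lemma as a black box, whereas you have supplied a complete and correct proof along the same lines as the cited source.
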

The following lemma provides bounds for ratios of two consecutive balancing numbers.


\begin{lemma}\label{2.7}
For any natural number $n$,  $\frac{B_{n+1}}{B_n}>\alpha$. 
\end{lemma}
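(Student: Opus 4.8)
The plan is to work directly from the Binet form $B_n=\dfrac{\alpha^n-\beta^n}{\alpha-\beta}$ recorded in Section 1, using the two facts that $\alpha\beta=1$ (so that $0<\beta<1<\alpha$) and that $B_n>0$ for every $n\ge 1$. Writing the ratio explicitly as
$$\frac{B_{n+1}}{B_n}=\frac{\alpha^{n+1}-\beta^{n+1}}{\alpha^{n}-\beta^{n}},$$
the claimed inequality $B_{n+1}/B_n>\alpha$ becomes, after clearing the positive denominator $\alpha^{n}-\beta^{n}$, the assertion $\alpha^{n+1}-\beta^{n+1}>\alpha(\alpha^{n}-\beta^{n})$.

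The second step is a one-line simplification: the right-hand side equals $\alpha^{n+1}-\alpha\beta^{n}$, so the inequality reduces to $\alpha\beta^{n}-\beta^{n+1}>0$, that is, $\beta^{n}(\alpha-\beta)>0$. Since $\beta=3-2\sqrt{2}>0$ we have $\beta^{n}>0$, and $\alpha-\beta=4\sqrt{2}>0$; hence the product is strictly positive and the reduced inequality holds for all $n$. Reversing the equivalences (each of which is reversible because $B_n>0$) then establishes the lemma.

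An alternative, induction-based route would use the recurrence directly: setting $r_n=B_{n+1}/B_n$, the relation $B_{n+1}=6B_n-B_{n-1}$ gives $r_n=6-1/r_{n-1}$, with base value $r_1=B_2/B_1=6>\alpha$. Assuming $r_{n-1}>\alpha$, positivity yields $1/r_{n-1}<1/\alpha=\beta$, whence $r_n>6-\beta=\alpha$, the last equality being $6-(3-2\sqrt{2})=3+2\sqrt{2}=\alpha$. Either way there is no serious obstacle here; the only points that require care are the positivity of $B_n$, needed to preserve the direction of the inequality when clearing denominators, and the positivity of $\beta$, needed to guarantee $\beta^{n}>0$. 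Both are immediate from $0<\beta<1$, so I expect the Binet-form argument to be the shortest and cleanest to write out.
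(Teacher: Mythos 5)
Your main Binet-form argument is correct and is essentially the same as the paper's own proof: the paper likewise computes $B_{n+1}-\alpha B_n$ from the Binet form and simplifies it (using $\alpha\beta=1$) to $\beta^n>0$, which is exactly your reduced inequality $\beta^n(\alpha-\beta)>0$ after dividing through by $\alpha-\beta$. Your alternative induction route via $r_n=6-1/r_{n-1}$ is also valid, but the Binet computation is what the paper does and, as you anticipated, is the cleanest.
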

\begin{proof}
Using the fact that $\alpha \beta=1$, we get
\begin{align*}
B_{n+1}-\alpha B_n &=\frac{\alpha^{n+1}-\beta^{n+1}}{\alpha-\beta}-\alpha \frac{\alpha^{n}-\beta^{n}}{\alpha-\beta}\\
&= \frac{\beta^{n-1}-\beta^{n+1}}{\alpha-\beta}\\
 &= \frac{\beta^{n-1}(1-\beta^2)}{\frac{1}{\beta}-\beta}\\
&=\beta^n >0
\end{align*}
\end{proof}

The following corollary is a direct consequence of Lemma \ref{2.7}.
\begin{corollary} \label{2.8}
For all natural number $n\geq2$, $B_n>\alpha^{n-1}$.
\end{corollary}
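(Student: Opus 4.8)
The plan is to iterate the strict bound of Lemma~\ref{2.7}. Rewriting $\frac{B_{n+1}}{B_n}>\alpha$ in the equivalent form $B_{n+1}>\alpha B_n$, which is valid for every natural number $n$ since all the balancing numbers involved are positive, I would apply this inequality successively with the index running from $n-1$ down to $1$.

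Concretely, for $n\geq2$ the chain
\begin{align*}
B_n>\alpha B_{n-1}>\alpha^2 B_{n-2}>\cdots>\alpha^{n-1}B_1
\end{align*}
telescopes, and since $B_1=1$ the right-hand side collapses to $\alpha^{n-1}$, yielding $B_n>\alpha^{n-1}$. Equivalently, one may argue by induction on $n$: the base case $n=2$ amounts to checking $B_2=6>3+2\sqrt{2}=\alpha=\alpha^{2-1}$, and the inductive step multiplies the hypothesis $B_n>\alpha^{n-1}$ by the factor $\alpha$ supplied by Lemma~\ref{2.7} to obtain $B_{n+1}>\alpha B_n>\alpha^{n}$.

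There is essentially no serious obstacle here, as the corollary is a direct consequence of Lemma~\ref{2.7}; the only point requiring a moment's care is that each inequality in the chain is strict and every factor $\alpha$ and every $B_k$ is positive, so the composed inequality remains strict and no degenerate cancellation can occur. The anchoring of the recursion at $B_1=1$ is precisely what produces the exact exponent $n-1$ rather than $n$, which is why the statement is restricted to $n\geq2$.
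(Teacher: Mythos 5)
Your proof is correct and matches the paper's intent exactly: the paper states Corollary~\ref{2.8} as an immediate consequence of Lemma~\ref{2.7} without further proof, and your telescoping of $B_{k+1}>\alpha B_k$ down to $B_1=1$ is precisely that direct consequence. Your care in noting why the strict inequality forces the restriction $n\geq2$ (since $B_1=\alpha^0$ gives equality) is a correct and worthwhile observation.
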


The following Lemma provides an upper bound for the $n$-th balancing number. 
\begin{lemma}\label{2.9}
For all natural number $n\geq1$, $B_n<\alpha^n$.
\end{lemma}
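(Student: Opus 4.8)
The plan is to argue directly from the Binet form of $B_n$ rather than by induction, since the closed expression makes the bound transparent. Recall from Section 1 that $B_n=\frac{\alpha^n-\beta^n}{\alpha-\beta}$ with $\alpha=3+2\sqrt{2}$ and $\beta=3-2\sqrt{2}$, so that the denominator is $\alpha-\beta=4\sqrt{2}$.

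First I would record the two elementary facts that drive the whole estimate. Since $\beta=3-2\sqrt{2}>0$, every power $\beta^n$ is strictly positive, so subtracting $\beta^n$ in the numerator only decreases it. And since $4\sqrt{2}>1$, the denominator $\alpha-\beta$ exceeds $1$, so dividing by it only decreases a positive quantity. These are the only structural inputs needed.

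With these in hand, I would combine them in a short chain of inequalities. Dropping the positive term $\beta^n$ gives $\alpha^n-\beta^n<\alpha^n$, and then dividing by $4\sqrt{2}>1$ yields
\begin{align*}
B_n=\frac{\alpha^n-\beta^n}{4\sqrt{2}}<\frac{\alpha^n}{4\sqrt{2}}<\alpha^n,
\end{align*}
which is the claimed bound for every $n\geq1$. Note the argument needs no case distinction and holds verbatim for all $n\geq 1$.

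There is no genuine obstacle here; the only point requiring a moment's care is confirming the signs and magnitudes of the two constants, namely that $\beta>0$ (so the numerator term is positive) and that $\alpha-\beta=4\sqrt{2}>1$ (so the division step preserves the strict inequality). Both are immediate from $\sqrt{2}>\tfrac12$, and once they are noted the estimate is a one-line consequence of the Binet form.
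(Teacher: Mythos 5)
Your proof is correct and is essentially identical to the paper's own one-line argument, which likewise drops the positive term $\beta^n$ and uses $\alpha-\beta=4\sqrt{2}>1$ to get $B_n<\frac{\alpha^n}{4\sqrt{2}}<\alpha^n$. One tiny slip in your closing aside: the positivity $\beta=3-2\sqrt{2}>0$ follows from $\sqrt{2}<\tfrac{3}{2}$ rather than from $\sqrt{2}>\tfrac{1}{2}$ (the latter only justifies $4\sqrt{2}>1$), but this does not affect the validity of the argument.
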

\begin{proof}
It follows from the Binet formula for balancing numbers that $B_n=\frac{\alpha^{n}-\beta^{n}}{\alpha-\beta}<\frac{\alpha^n}{4\sqrt{2}}<\alpha^n$.
\end{proof}
The following Lemma gives a comparison of the $(m+n)$-th and $(m-n)$-th balancing numbers with the product and ratio of the $m$-th and $n$-th balancing numbers, respectively.


\begin{lemma}\label{2.10}
If $m$ and $n$ are two natural numbers, then $B_{m+n}>B_mB_n$ and $B_{m-n}< \frac{B_m}{B_n}$.
\end{lemma}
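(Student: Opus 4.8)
The plan is to reduce both inequalities to the addition formula $B_{m+n}=B_mC_n+C_mB_n$ of Lemma \ref{2.1}(1), together with the elementary observation that the Lucas-balancing numbers dominate the balancing numbers term by term. The whole argument then splits into a single preliminary comparison followed by two mirror-image one-line deductions.

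First I would record the inequality $C_n>B_n$ for every $n\geq1$. This is immediate from the relation $C_n=\sqrt{8B_n^2+1}$ noted in Section 1: since $8B_n^2+1>B_n^2$ whenever $B_n>0$, taking positive square roots gives $C_n>B_n$. (Equivalently, one may expand both Binet forms and use $\beta^n>0$, since $\beta=3-2\sqrt2>0$.)

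For the first inequality I would apply Lemma \ref{2.1}(1) directly to write $B_{m+n}=B_mC_n+C_mB_n$. Because $B_m>0$ and $C_n>B_n$, the first summand already satisfies $B_mC_n>B_mB_n$, while the second summand $C_mB_n$ is strictly positive; adding them yields $B_{m+n}>B_mB_n$. For the second inequality, assuming $m>n$ so that $m-n$ is a natural number, I would set $m=(m-n)+n$ and again invoke the addition formula: $B_m=B_{m-n}C_n+C_{m-n}B_n$. Since $B_{m-n}>0$ and $C_n>B_n$, we have $B_{m-n}C_n>B_{m-n}B_n$, and the remaining term $C_{m-n}B_n$ is positive, so $B_m>B_{m-n}B_n$; dividing by $B_n>0$ gives $B_{m-n}<B_m/B_n$.

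I do not expect a genuine obstacle here: the only content beyond routine algebra is the monotone comparison $C_n>B_n$, after which both claims follow symmetrically from the addition formula. The one point requiring a word of care is the range of indices, since the second inequality presumes $m>n$ (with the boundary case $m=n$ trivial, as $B_0=0<1=B_m/B_n$) so that $B_{m-n}$ is defined. It is worth noting that the cruder bounds of Corollary \ref{2.8} and Lemma \ref{2.9} are too lossy for the first inequality, since $B_{m+n}>\alpha^{m+n-1}$ and $B_mB_n<\alpha^{m+n}$ do not combine favourably; this is what forces the use of the exact addition formula rather than a size estimate.
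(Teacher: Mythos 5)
Your proof is correct and follows essentially the same route as the paper: both rest on the addition formula $B_{m+n}=B_mC_n+C_mB_n$ from Lemma \ref{2.1} together with $C_n>B_n$, and both obtain the second inequality by writing $B_m=B_{(m-n)+n}>B_{m-n}B_n$. Your explicit handling of the index condition $m>n$ and the justification of $C_n>B_n$ via $C_n=\sqrt{8B_n^2+1}$ are welcome details the paper leaves implicit, but the argument is the same.
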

\begin{proof}
Let $m$ and $n$ be natural numbers. Since by Lemma \ref{2.1}, $B_{m+n}=B_mC_n+C_mB_n$  and from the definition of Lucas-balancing numbers $B_n<C_n$, it follows that $B_{m+n}>B_mB_n$. Since $B_m=B_{(m-n)+n}> B_{m-n}B_n$, the inequality $B_{m-n}< \frac{B_m}{B_n}$ follows.
\end{proof}

The next lemma gives a comparison of $n^k$-th balancing number with the $k$-th power of $n$-th balancing number.


\begin{lemma}\label{2.11}
$B_{n^k} > {B_n}^k ~~~ for ~~n \geq 2~and~k \geq 1.$
\end{lemma}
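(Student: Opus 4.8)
The plan is to argue by induction on $k$, using Lemma \ref{2.10} as the sole engine. For $k=1$ the asserted inequality degenerates to the equality $B_{n^1}=B_n$, so the genuine content is the case $k\geq 2$; I would take $k=2$ as the base case and regard $k=1$ as the trivial start of the recursion. Throughout I would use that $B_n\geq B_2=6>1$ for $n\geq 2$, so that raising $B_n$ to a larger exponent only enlarges it.

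First I would record an auxiliary super-multiplicativity estimate: for any natural number $m$ and any integer $j\geq 2$,
\[
B_{jm} > B_m^{\,j}.
\]
This follows by a short induction on $j$. The base case $j=2$ is exactly $B_{2m}=B_{m+m}>B_mB_m=B_m^{\,2}$, which is Lemma \ref{2.10} applied with both indices equal to $m$. For the inductive step, $B_{(j+1)m}=B_{jm+m}>B_{jm}B_m$ by Lemma \ref{2.10}, and the inductive hypothesis then gives $B_{(j+1)m}>B_m^{\,j}\cdot B_m=B_m^{\,j+1}$.

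With this in hand the main induction is immediate. For the base case $k=2$, apply the estimate with $m=n$ and $j=n$ to obtain $B_{n^2}=B_{n\cdot n}>B_n^{\,n}\geq B_n^{\,2}$, the last step because $n\geq 2$ and $B_n>1$. For the inductive step, assume $B_{n^{k}}>B_n^{\,k}$. Applying the auxiliary estimate with $m=n^{k}$ and $j=n$ gives $B_{n^{k+1}}=B_{n\cdot n^{k}}>B_{n^{k}}^{\,n}>\bigl(B_n^{\,k}\bigr)^{n}=B_n^{\,kn}\geq B_n^{\,k+1}$, where the final inequality uses $kn\geq k+1$ together with $B_n>1$.

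The only thing requiring care is the exponent bookkeeping $kn\geq k+1$, equivalently $k(n-1)\geq 1$, which holds for all $n\geq 2$ and $k\geq 1$. I expect the main (mild) obstacle to be purely this degeneracy at $k=1$, where strictness collapses to equality. It is worth noting that the crude Binet bounds of Corollary \ref{2.8} and Lemma \ref{2.9} are too weak to close the argument directly: they would require $n^k-1\geq nk$, which already fails at $n=k=2$. Thus the recursive use of Lemma \ref{2.10}, rather than the growth estimates, is what makes the proof go through.
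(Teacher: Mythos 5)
Your proof is correct, and it runs on the same engine as the paper's---iterated use of Lemma \ref{2.10} via induction---but with a different decomposition of the index. The paper first disposes of the exponential index by monotonicity: since $n^k \geq nk$ for $n \geq 2$, it gets $B_{n^k} \geq B_{nk}$, and then proves $B_{nk} > B_n^k$ by exactly the induction you use for your auxiliary estimate (your $B_{jm} > B_m^{\,j}$ with $m=n$, $j=k$ is literally the paper's second step). Where you diverge is in avoiding the arithmetic fact $n^{k-1} \geq k$ altogether: you instead re-apply super-multiplicativity at the exponentially growing indices via $n^{k+1} = n \cdot n^k$, at the cost of the milder exponent check $kn \geq k+1$. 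The paper's route is a line shorter; yours buys a stronger intermediate bound ($B_{n^{k+1}} > B_{n^k}^{\,n}$, in particular $B_{n^2} > B_n^{\,n}$ rather than just $B_n^{\,2}$) and needs only $k(n-1) \geq 1$. Your remark on the degeneracy at $k=1$ is apt and in fact applies equally to the paper: as stated, the strict inequality fails at $k=1$ (where $B_{n^1} = B_n^{\,1}$), and the paper's own induction likewise only delivers strictness for $k \geq 2$, a point the paper glosses over. Your side observation that the Binet bounds of Corollary \ref{2.8} and Lemma \ref{2.9} alone cannot close the argument (they would need $n^k - 1 \geq nk$, which fails at $n=k=2$) is also correct and explains why the recursive use of Lemma \ref{2.10} is genuinely necessary.
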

\begin{proof}

Let $m$,$n$ and $k$ be natural numbers. Since $B_m\geq B_n$ whenever $m\geq n$, and $n^k \geq nk$, for all $n \geq 2$, it follows that $B_{n^k} \geq B_{nk}$. Now, using Lemma \ref{2.10} and simple mathematical induction, it is easy to see that $B_{nk} > {B_n}^k$.
\end{proof}

The following lemma gives certain bounds involving the arithmetic functions. For the proof of this lemma the readers are advised to go through \cite{Luca 1997} and  \cite{Ward 1955}. 


\begin{lemma}\label{2.12}{\rm{(\cite{Luca 1997}, Lemma 3)}}
Let $m$ and $n$ be natural numbers.
\begin{enumerate}
\item If $n \geq 2\cdot 10^9$, then $\phi(n) >\frac{n}{\rm{log}~n}$.\\
\item If $1 \leq n < 2\cdot 10^9$, then $\phi(n)>\frac{n}{6}$.\\
\item If $m \geq 2$ and $k \geq 1$, then $\frac{m}{\phi(m)}> \frac{\sigma_k(m)}{m^k}$.\\
\item If $n$ is not prime, then $n-\phi(n) \geq \sqrt{n}$.\\
\item If $n$ is not prime, then $\sigma_k(n)-n^k \geq \sqrt{n^k}$.
\end{enumerate}
\end{lemma}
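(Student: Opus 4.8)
The plan is to treat the five assertions in two groups: the analytic estimates \emph{(1)} and \emph{(2)} on the ratio $\phi(n)/n$, which are quoted verbatim from \cite{Luca 1997}, and the elementary multiplicative and divisor inequalities \emph{(3)}--\emph{(5)}, which I would prove directly. The common engine throughout is the product formula $\phi(n)/n=\prod_{p\mid n}(1-1/p)$ together with the Euler product for $\sigma_k$.

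For \emph{(3)} I would rewrite both sides as products over the prime powers $p^a\parallel m$. On one hand $\frac{m}{\phi(m)}=\prod_{p\mid m}(1-p^{-1})^{-1}$, and on the other
\[
\frac{\sigma_k(m)}{m^k}=\sum_{d\mid m}\frac{1}{d^k}=\prod_{p^a\parallel m}\frac{1-p^{-(a+1)k}}{1-p^{-k}}.
\]
Since both products run over exactly the primes dividing $m$, it suffices to compare them factor by factor. Because $k\ge 1$ gives $p^{-k}\le p^{-1}$, hence $1-p^{-1}\le 1-p^{-k}$, we obtain $(1-p^{-1})^{-1}\ge(1-p^{-k})^{-1}>\frac{1-p^{-(a+1)k}}{1-p^{-k}}$, the final step because $1-p^{-(a+1)k}<1$. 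As $m\ge 2$ has at least one prime factor, multiplying these strict inequalities yields \emph{(3)}.

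For \emph{(4)} and \emph{(5)} I would use that a composite $n$ has smallest prime factor $p\le\sqrt n$, equivalently a largest proper divisor $n/p\ge\sqrt n$. For \emph{(4)}, from $\phi(n)\le n(1-1/p)$ we get $n-\phi(n)\ge n/p\ge n/\sqrt n=\sqrt n$. For \emph{(5)}, the sum $\sigma_k(n)$ already contains the two distinct terms $n^k$ and $(n/p)^k$, so $\sigma_k(n)-n^k\ge (n/p)^k\ge(\sqrt n)^k=\sqrt{n^k}$.

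The main obstacle lies in \emph{(1)} and \emph{(2)}, which demand explicit (not merely asymptotic) control of $\phi(n)/n$. For \emph{(1)} I would invoke the Rosser--Schoenfeld lower bound $\phi(n)>n/\bigl(e^{\gamma}\log\log n+c/\log\log n\bigr)$, valid for $n\ge 3$, and then check that the denominator stays below $\log n$ once $n\ge 2\cdot10^{9}$; this reduces to the inequality $e^{\gamma}x+c/x<e^{x}$ with $x=\log\log n\ge\log\log(2\cdot10^{9})$, which holds since the right side grows exponentially in $x$ while the left grows linearly. For \emph{(2)} the key observation is that $\phi(n)/n$ is minimized, among $n$ below a fixed bound, by the numbers built from the smallest primes, i.e. the primorials; so $\phi(n)>n/6$ need only be verified at the finitely many primorials not exceeding $2\cdot10^{9}$. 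This last verification is where the constant $1/6$ is genuinely tight, so I expect the careful finite check at the extremal primorial to be the delicate step of the whole lemma.
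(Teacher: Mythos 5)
The first thing to note is that the paper does not prove this lemma at all: it is quoted from \cite{Luca 1997} (with a pointer to \cite{Ward 1955}), so there is no in-paper argument to compare against and your proposal must be judged on its own merits. Your proofs of (3), (4), (5) are correct and complete: the local comparison $(1-p^{-1})^{-1}\ge(1-p^{-k})^{-1}>\frac{1-p^{-(a+1)k}}{1-p^{-k}}$ is exactly right and $m\ge2$ supplies at least one (strict) factor; and using the smallest prime factor $p\le\sqrt n$ of a composite $n$ settles (4) and (5) at once (as in the statement itself, ``not prime'' must be read as ``composite,'' since $n=1$ is a trivial exception to both). Your route to (1) also works, with one caveat worth recording: the Rosser--Schoenfeld bound with the standard constant $2.50637$ has a single exceptional value, $n=223092870=2\cdot3\cdot5\cdot7\cdot11\cdot13\cdot17\cdot19\cdot23$; since that number lies below $2\cdot10^9$, part (1) is unaffected once you note it.

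Part (2) is where your proposal genuinely fails --- through no fault of your method. Your reduction is sound: among $n$ below a fixed bound, $\phi(n)/n$ is minimized at the largest primorial below that bound. But the ``delicate finite check'' you deferred does not come out in the statement's favor. The largest primorial below $2\cdot10^9$ is precisely the number above, $N=223092870$, and
\begin{equation*}
\frac{\phi(N)}{N}=\prod_{p\le 23}\Bigl(1-\frac1p\Bigr)=\frac{1216512}{7436429}<\frac16,
\end{equation*}
since $6\cdot1216512=7299072<7436429$. So part (2), as quoted in the paper, is \emph{false}, and no proof can close this gap; the threshold $2\cdot10^9$ is evidently an overstatement or misquotation of the source (the inequality $\phi(n)>n/6$ does hold for all $n<223092870$, because the next primorial down is $\prod_{p\le19}p=9699690$ with $\prod_{p\le19}(1-1/p)=\frac{55296}{323323}>\frac16$). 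The paper's own use of part (2), in Case 1 of Theorem \ref{3.1}, survives: there it is applied only to balancing numbers $B_m<2\cdot10^9$ (those with $m\le13$), and any counterexample to $\phi(n)>n/6$ must have at least nine distinct prime factors, while those numbers have at most seven. But as a statement about all $n<2\cdot10^9$, (2) is not a theorem, and the extremal-primorial step you flagged as delicate is exactly where it breaks.
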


The following lemma deals with an inequality involving the Euler totient function of balancing numbers.


\begin{lemma}\label{2.13}

For any natural numbers $n$, $\phi(B_n) \geq B_{\phi(n)}$ and equality holds only if $n=1$.
\end{lemma}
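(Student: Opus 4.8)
The plan is to work from the product formula $\phi(B_n)=B_n\prod_{q\mid B_n}\left(1-\tfrac1q\right)$, so that the claim $\phi(B_n)\ge B_{\phi(n)}$ reduces to bounding $\prod_{q\mid B_n}\left(1-\tfrac1q\right)$ below by $B_{\phi(n)}/B_n$. The case $n=1$ is immediate since $B_1=1$ and $\phi(1)=1$, which gives equality; for $n\ge 2$ I aim to prove strict inequality. The proof then splits according to whether $n$ is composite or prime, because the size of the gap $n-\phi(n)$ governs how much room is available.

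For composite $n$, Lemma \ref{2.12} gives $n-\phi(n)\ge\sqrt n$, so the ratio estimate in Lemma \ref{2.10} yields $B_n/B_{\phi(n)}>B_{n-\phi(n)}\ge B_{\lceil\sqrt n\rceil}$, which is large. When $B_n<2\cdot10^9$ I would combine $\phi(B_n)>B_n/6$ from Lemma \ref{2.12} with $B_{n-\phi(n)}\ge B_2=6$, giving $B_{\phi(n)}<B_n/6<\phi(B_n)$. When $B_n\ge 2\cdot10^9$ I would instead use $\phi(B_n)>B_n/\log B_n$, together with Corollary \ref{2.8} and Lemma \ref{2.9} to write $B_n/B_{\phi(n)}>\alpha^{\sqrt n-1}$ and $\log B_n<n\log\alpha$; the desired inequality $\alpha^{\sqrt n-1}\ge n\log\alpha$ then holds for all sufficiently large $n$, with the finitely many remaining composites checked directly.

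The prime case is where the argument is most delicate, and I expect it to be the main obstacle: when $n=p$ is prime, $\phi(p)=p-1$ and the gap $n-\phi(n)=1$ is too small for the crude totient bounds to suffice (indeed $B_p/B_{p-1}<6<\log B_p$ for large $p$, so the $B_n/\log B_n$ estimate fails here). I would instead exploit the arithmetic of the prime divisors of $B_p$ directly. Every prime $q\mid B_p$ has rank of apparition dividing $p$ and different from $1$, hence equal to $p$; by Lemma \ref{2.6} this forces $q\equiv\pm1\pmod p$, and since $B_p$ is odd the possibilities $q=p\pm1$ are excluded, so $q\ge 2p-1$. Consequently the number $k$ of distinct prime divisors of $B_p$ obeys $(2p-1)^k\le B_p<\alpha^p$ by Lemma \ref{2.9}, giving $k\le p\log\alpha/\log(2p-1)$, and therefore $\phi(B_p)/B_p\ge\left(1-\tfrac{1}{2p-1}\right)^k$.

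Finally I would estimate this last product: taking logarithms and inserting the bound on $k$ shows $\left(1-\tfrac{1}{2p-1}\right)^k\to1$ as $p\to\infty$, so it eventually exceeds $B_{p-1}/B_p<1/\alpha$, yielding $\phi(B_p)>B_{p-1}$ for all large primes $p$, the finitely many small primes being verified by hand. The crux throughout is that the forced primitive divisors of $B_p$ guaranteed by Lemma \ref{2.5} are all large, which keeps $\phi(B_p)/B_p$ close to $1$ and overcomes the narrow margin left by $n-\phi(n)=1$; assembling the composite and prime cases then gives strict inequality for every $n\ge 2$, with equality exactly at $n=1$.
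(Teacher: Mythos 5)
Your proof is correct, but it takes a genuinely different route from the paper. The paper disposes of Lemma \ref{2.13} in two lines: it cites Luca's theorem from \cite{Luca 1997}, which asserts $\phi(|w_n|)\geq |w_{\phi(n)}|$ for any recurrence $w_{n+1}=Aw_n+Bw_{n-1}$, $w_0=0$, $w_1=1$ whose characteristic roots are real, and observes that $\alpha=3+2\sqrt{2}$, $\beta=3-2\sqrt{2}$ are real. You instead reconstruct a self-contained, balancing-specific proof, essentially re-running Luca's original strategy with the paper's own toolkit: the composite case via $n-\phi(n)\geq\sqrt{n}$ and the totient bounds of Lemma \ref{2.12} combined with the ratio estimate of Lemma \ref{2.10}, and the prime case via the fact that every prime factor $q$ of $B_p$ satisfies $q\equiv\pm1\pmod{p}$, hence $q\geq 2p-1$. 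That last fact is precisely the first assertion of the paper's Lemma \ref{2.14}, proved there by the same argument (Lemmas \ref{2.2}, \ref{2.3}, \ref{2.6}) but deployed only for Theorem \ref{3.1}, so you could cite it rather than rederive it. What each approach buys: the citation is short but leaves the strictness claim (equality only at $n=1$) unaddressed, whereas your argument yields strict inequality for all $n\geq2$ explicitly; the cost is your two deferred finite verifications, which do in fact close, and more cleanly than you suggest. In the prime case no asymptotics are needed: from $k\leq p\log\alpha/\log(2p-1)$ and $\log\bigl(1+\tfrac{1}{2p-2}\bigr)<\tfrac{1}{2p-2}$ you get $k\log\tfrac{2p-1}{2p-2}<\tfrac{p\log\alpha}{(2p-2)\log(2p-1)}<\log\alpha$ for every odd prime $p\geq3$ (since $\tfrac{p}{2p-2}\leq\tfrac34<\log 5$), so $\phi(B_p)/B_p>1/\alpha>B_{p-1}/B_p$ by Lemma \ref{2.7} outright; in the composite case $\alpha^{\sqrt{n}-1}>n\log\alpha$ holds for all $n\geq14$, and the composites $n\in\{4,6,8,9,10,12\}$ are already covered uniformly by your $B_n<2\cdot10^9$ branch. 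The one case to flag separately is $n=2$: your prime-divisor argument requires $p$ odd (it uses that $B_p$ is odd, and indeed $B_2=6$ has the prime factor $2<2\cdot2-1$), but $\phi(B_2)=\phi(6)=2>1=B_{\phi(2)}$ directly.
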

\begin{proof}
Consider the  binary recurrence sequence $\{w_n\}_{n\geq1}$  defined just after Lemma \ref{2.3}. Luca \cite{Luca 1997} proved that if the characteristic roots $\alpha$ and $\beta$ are real then $\phi(|w_n|) \geq |w_{\phi(n)}|$. Since the characteristic roots $\alpha=3+2\sqrt{2}$ and $\beta=3-2\sqrt{2}$  corresponding to the recurrence relation of the balancing sequence are real, the inequality $\phi(B_n) \geq B_{\phi(n)}$ holds for all $n \geq 1$.
\end{proof}
The following lemma will play a very crucial role while proving an important result of this paper.


\begin{lemma}\label{2.14}
If $n$ is an odd prime and $B_n=p_1^{\gamma_1}\cdots p_t^{\gamma_t}$ is the canonical decomposition of $B_n$ then $p_i \geq 2n-1$ for $i=1,\ldots, t$. Further, if  the inequality $\sigma_k(B_n)>B_{\sigma_k(n)}$ is satisfied for all natural numbers $k$ and $n \geq 2$, then $t> 2(n-1)~ {\rm{log}}~5$.

\end{lemma}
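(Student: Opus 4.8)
The plan is to prove the two assertions separately. For the first, fix a prime $p=p_i$ dividing $B_n$ and let $z(p)$ denote its rank of apparition, the least positive index $m$ with $p\mid B_m$. Using Lemma~\ref{2.3} (the identity $(B_a,B_b)=B_{(a,b)}$) I would argue in the standard way that $p\mid B_m$ forces $z(p)\mid m$: if $d=(z(p),m)$ then $p\mid (B_{z(p)},B_m)=B_d$, and minimality of $z(p)$ gives $d=z(p)$. Applying this with $m=n$, and recalling that $n$ is prime while $B_1=1$, yields $z(p)=n$. Next I would invoke Lemma~\ref{2.6}: every odd prime has the form $8x\pm1$ or $8x\pm3$, so either $p\mid B_{p-1}$ or $p\mid B_{p+1}$, whence $z(p)=n$ divides $p-1$ or $p+1$. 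Thus $p=\ell n\pm1$ for some integer $\ell\ge1$.

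The key refinement, and the first place where care is needed, is a parity argument. Since $n$ is odd, the recurrence $B_{n+1}=6B_n-B_{n-1}$ with $B_0=0,\,B_1=1$ shows that $B_n$ is odd, so $p$ is odd and $p\mp1=\ell n$ is even; as $n$ is odd this forces $\ell$ to be even, hence $\ell\ge2$. Consequently $p\ge 2n-1$ in both cases, which is the claimed bound. (Incidentally this also shows every prime factor of $B_n$ is primitive when $n$ is prime.)

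For the second assertion I would use the hypothesised inequality at the single value $k=2$, which the ``for all $k$'' hypothesis makes available. Since $n$ is an odd prime, $\sigma_2(n)=1+n^2$, so the assumption gives $\sigma_2(B_n)>B_{1+n^2}$. Lemma~\ref{2.1}(2) furnishes the lower bound $B_{1+n^2}>5^{n^2}$. For an upper bound on $\sigma_2(B_n)$ I would combine Lemma~\ref{2.12}(3), namely $\sigma_k(m)/m^k<m/\phi(m)=\prod_i p_i/(p_i-1)$, with the bound $p_i\ge 2n-1$ from the first part, and then apply $B_n<6^{n-1}$ from Lemma~\ref{2.1}(2), to obtain
\begin{equation*}
\sigma_2(B_n)<B_n^2\prod_{i=1}^{t}\frac{p_i}{p_i-1}\le B_n^2\Bigl(1+\tfrac{1}{2(n-1)}\Bigr)^{t}<6^{2(n-1)}\Bigl(1+\tfrac{1}{2(n-1)}\Bigr)^{t}.
\end{equation*}

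Chaining the two estimates with $5^{n^2}<\sigma_2(B_n)$ and taking logarithms gives
\begin{equation*}
n^2\log5<2(n-1)\log6+t\,\log\!\Bigl(1+\tfrac{1}{2(n-1)}\Bigr).
\end{equation*}
Using $\log(1+x)<x$, so that $1/\log\!\bigl(1+\tfrac{1}{2(n-1)}\bigr)>2(n-1)$, and checking the elementary inequality $(n^2-1)\log5\ge 2(n-1)\log6$ (which reduces to $(n+1)\log5\ge2\log6$, valid for all $n\ge3$), I see that the numerator $n^2\log5-2(n-1)\log6$ is at least $\log5>0$, and therefore $t>2(n-1)\log5$. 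The main obstacle is arranging the estimates so that this residual numerator stays above $\log5$; it is the freedom to choose a convenient exponent (here $k=2$) that makes the bound come out cleanly, while the large prime factors $p_i\ge 2n-1$ are exactly what keeps $\prod p_i/(p_i-1)$ close to $1$ and forces $t$ to be correspondingly large.
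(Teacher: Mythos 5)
Your proof is correct, and its two halves relate to the paper differently. For the first assertion you follow essentially the paper's route: the paper likewise combines Lemma 2.3 with Lemma 2.6, showing $p\mid B_{(p^2-1,\,n)}$ and hence $n\mid p^2-1$, which is the same computation you package as ``the rank of apparition of $p$ equals $n$''; both arguments land on $p\equiv\pm1\pmod n$ and must then exclude $p=n\pm1$. Here your parity refinement ($B_n$ is odd for odd $n$, so $p$ is odd, so $\ell$ is even in $p=\ell n\pm1$) is in fact tidier than the paper's one-line remark that $p\neq n\pm1$ ``since $p$ and $n$ are both primes and $n>2$,'' which tacitly needs $p\neq2$ (the case $n=3$, $p=n-1=2$) --- exactly the point your parity observation supplies; just note that the oddness of $p$ should logically precede your invocation of Lemma 2.6, a harmless reordering. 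For the second assertion you genuinely diverge: the paper keeps $k$ arbitrary and, via Lemmas 2.7, 2.11 and 2.12(3), bounds the product directly, $\prod_{i=1}^{t}\bigl(1+\frac{1}{p_i-1}\bigr)=\frac{B_n}{\phi(B_n)}>\frac{B_{1+n^k}}{B_{n^k}}>\alpha>5$, then takes logarithms and inserts $p_i\geq 2n-1$; you instead fix $k=2$ and trade Lemmas 2.7 and 2.11 for the explicit bounds $5^{m-1}<B_m<6^{m-1}$ of Lemma 2.1(2), absorbing the loss $2(n-1)\log 6$ via $(n+1)\log 5\geq 2\log 6$, and your arithmetic checks out, yielding the same conclusion $t>2(n-1)\log 5$. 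Be aware, though, that your choice $k=2$ is not cosmetic: with $k=1$ your residual $n\log 5-(n-1)\log 6$ turns negative for $n\geq 11$, so your scheme genuinely needs $k\geq2$ and hence the ``for all $k$'' hypothesis, whereas the paper's chain works for any single $k\geq1$ for which the inequality holds --- which is the form actually used in the proof of Theorem 3.1, where the inequality is assumed only for \emph{some} $k\geq1$. In exchange, your version eliminates Lemmas 2.7 and 2.11 entirely and makes all the size bookkeeping explicit.
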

\begin{proof}
Let $p$ be any odd prime. By virtue of Lemma \ref{2.6}, $p|B_{p+1}$ or $p|B_{p-1}$. Since $p+1$ and $p-1$ both divide $p^2-1$, it follows from Lemma \ref{2.2} that both $B_{p-1}$ and $B_{p+1}$ divide $B_{p^2-1}$ and hence $p|B_{p^2-1}$. If $p$ is one of the primes $p_1, p_2,\ldots, p_t$ then $p|B_n$ and hence $p |(B_{p^2-1},B_n)$. Since $(B_{p^2-1},B_n)= B_{(p^2-1,n)}$, by virtue of Lemma \ref{2.3}, it follows that $p|B_{(p^2-1,n)}$.
 If $n \nmid p^2-1$, then $(p^2-1,n)=1$ and then $p|B_1=1$ which is not possible. Thus, $n|p^2-1$ and since $n$ is a prime, $n|p+1$ or $n|p-1$ and hence  $p \equiv \pm1(\bmod~ n)$. Clearly $p \neq n \pm 1$ since $p$ and $n$ are both primes and $n>2$. Hence $p \geq 2n-1$. This proves the first part.\\

We next prove the second part assuming that the inequality $\sigma_k(B_n)\geq B_{\sigma_k(n)}$ holds for all natural numbers $k$ and $n\geq 2$. Since
\begin{equation*}
\frac{B_n}{\phi(B_n)}=\frac{B_n}{B_n \prod_{i=1}^t \big(1-\frac{1}{p_i}\big) }=\prod_{i=1}^t\bigg(1+\frac{1}{p_i-1}\bigg),
\end{equation*}
using Lemma \ref{2.7}, Lemma \ref{2.11} and Lemma \ref{2.12}, we get
\begin{equation}\label{2.1}
\prod_{i=1}^t\bigg(1+\frac{1}{p_i-1}\bigg)= \frac{B_n}{\phi(B_n)} > \frac{\sigma_k(B_n)}{{B_n}^k} > \frac{B_{\sigma_k(n)}}{{B_n}^k}>\frac{B_{1+n^k}}{{B_n}^k} \geq 
\frac{B_{1+n^k}}{B_{n^k}} > \alpha >5
\end{equation}
Taking logarithm on both sides, we get
$$\sum_{i=1}^t \text{log}\bigg(1+\frac{1}{p_i-1}\bigg) > \text{log}~5.$$
Since $\text{log}(1+x)< x$ for all $x>0$, we conclude that
\begin{equation}\label{2.2}
\sum_{i=1}^t \frac{1}{p_i-1} >\text{log}~5
\end{equation}
In view of first part of the lemma, it follows that
\begin{equation}\label{2.3}
 \frac{t}{2(n-1)} > \text{log}~5
\end{equation}
which is equivalent to $t> 2(n-1)~ {\rm{log}}~5$.
\end{proof}

\section{MAIN RESULTS}

In this section, we provide  two important theorems dealing with arithmetic functions of the balancing sequence.
In the first theorem, we establish an inequality concerning the sum of $k$-th powers of divisors of balancing numbers.
\begin{theorem}\label{3.1}
The balancing numbers satisfy $\sigma_k(B_n)\leq B_{\sigma_k(n)}$ for all $n \geq 1$. Equality holds only if $n=1$.
\end{theorem}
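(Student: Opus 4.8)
The plan is to treat $n=1$ separately---where $\sigma_k(1)=1$ gives $\sigma_k(B_1)=\sigma_k(1)=1=B_1=B_{\sigma_k(1)}$, the promised equality---and to prove the \emph{strict} inequality $\sigma_k(B_n)<B_{\sigma_k(n)}$ for every $n\ge 2$. The first move is a reduction to a single clean inequality. Set $d=\sigma_k(n)-n^k$, noting $d\ge 1$ because $1$ and $n$ are both divisors of $n$. Applying Lemma \ref{2.12}(3) with $m=B_n$ gives $\sigma_k(B_n)<\frac{B_n}{\phi(B_n)}\,B_n^{k}$, while iterating Lemma \ref{2.7} and then Lemma \ref{2.11} gives $B_{\sigma_k(n)}=B_{n^k+d}>\alpha^{d}B_{n^k}>\alpha^{d}B_n^{k}$. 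Hence it suffices to establish
\begin{equation*}
\frac{B_n}{\phi(B_n)}\le \alpha^{\,\sigma_k(n)-n^k}\qquad(n\ge 2),
\end{equation*}
since then $\sigma_k(B_n)<\frac{B_n}{\phi(B_n)}B_n^{k}\le \alpha^{d}B_n^{k}<B_{\sigma_k(n)}$, the first inequality being strict.

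I would then split the verification of this reduced inequality into three cases. For an odd prime $n$ the target is $\frac{B_n}{\phi(B_n)}\le\alpha$, and this is exactly where Lemma \ref{2.14} is decisive. Arguing by contradiction, if $\sigma_k(B_n)\ge B_{\sigma_k(n)}$ then the chain of inequalities in the proof of Lemma \ref{2.14} forces $\frac{B_n}{\phi(B_n)}>\alpha>5$ and hence $t>2(n-1)\log 5$. But the first part of Lemma \ref{2.14} gives $p_i\ge 2n-1$ for every prime factor, so $B_n\ge\prod_i p_i\ge(2n-1)^{t}$; together with $B_n<6^{n-1}$ from Lemma \ref{2.1}(2) this yields $t<\frac{(n-1)\log 6}{\log(2n-1)}$. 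The two bounds on $t$ are incompatible, as dividing through by $n-1$ demands $\log(2n-1)<\frac{\log 6}{2\log 5}\approx 0.56$, i.e.\ $2n-1<1.75$, impossible for $n\ge 3$. The case $n=2$ is checked directly: $\frac{B_2}{\phi(B_2)}=\frac{6}{2}=3<\alpha$ while $d=1$.

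The remaining case is $n$ composite, where the primitive-divisor bound of Lemma \ref{2.14} is no longer available because the prime factors of $B_n$ may be small. Here the needed slack instead comes from $\sigma_k(n)$: Lemma \ref{2.12}(5) gives $d\ge n^{k/2}\ge 2$ (as $n\ge 4$), so the target weakens to $\frac{B_n}{\phi(B_n)}\le\alpha^{n^{k/2}}$. I would bound the left-hand side crudely through Lemma \ref{2.12}(1)--(2): either $\frac{B_n}{\phi(B_n)}<6\le\alpha^{2}$, or, once $B_n\ge 2\cdot 10^{9}$, $\frac{B_n}{\phi(B_n)}<\log B_n<(n-1)\log 6$. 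I expect the one genuinely delicate point to be this last subcase, where one must confirm that the super-exponential $\alpha^{\sqrt n}\le\alpha^{n^{k/2}}$ dominates the nearly linear $(n-1)\log 6$ for all composite $n$ with $B_n\ge 2\cdot 10^{9}$ (i.e.\ $n\ge 14$); this holds with room to spare but is the single step needing an explicit growth estimate rather than a direct citation. Because the closing chain is strict in every case, equality occurs precisely at $n=1$.
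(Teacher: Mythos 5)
Your proposal is correct and follows essentially the same route as the paper's proof: the same three-way split into composite $n$ (handled with Lemma \ref{2.12} and the $2\cdot 10^9$ threshold), odd prime $n$ (where Lemma \ref{2.14} and the bound $p_i\geq 2n-1$ produce the same numerical contradiction, just phrased as two incompatible bounds on $t$ rather than one chained inequality), and the direct check at $n=2$. Your repackaging of the argument as a single reduced target $\frac{B_n}{\phi(B_n)}\leq \alpha^{\sigma_k(n)-n^k}$, proved directly instead of by contradiction, is a tidy reorganization (and avoids the paper's separate $k\geq 2$ versus $k=1$ subcases) but is not a genuinely different argument.
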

\begin{proof}
Since for $k \geq 1$, $\sigma_k(B_1)=\sigma_k(1)= B_{\sigma_k(1)}$, the assertion of the theorem holds for $n=1$ and all $k\geq1$. For $n\geq 2$, assume to the contrary that
\begin{equation}\label{3.1}
\sigma_k(B_n)>B_{\sigma_k(n)}
\end{equation}
for some $k \geq 1$ and $n \geq 2$.
Firstly, we show that Inequality \eqref{3.1} holds only if $n$ is prime. Assume that Inequality \eqref{3.1} holds for some composite number $n \geq 2$.\\

\textbf{Case 1:} Suppose that $B_n < 2\cdot 10^9$. It is only possible when $n<13$. From Lemmas \ref{2.10}, \ref{2.11}, \ref{2.12} and Inequality \eqref{3.1}, it follows that
\begin{equation}\label{3.2}
B_2= 6 > \frac{B_n}{\phi(B_n)} > \frac{\sigma_k(B_n)}{{B_n}^k} > \frac{B_{\sigma_k(n)}}{{B_{n^k}}} > B_{\sigma_k(n)-n^k}
\end{equation}
which implies that $2 > \sigma_k(n)-n^k$. Since $n$ is not prime, it follows from Lemma \ref{2.12} that 
\begin{equation}\label{3.3}
2> \sqrt{n^k}.
\end{equation}
One can easily check that Inequality \eqref{3.3} does not hold for any composite number $n$.\\

\textbf{Case 2:} Suppose that $B_n \geq 2\cdot 10^9$. Then certainly $n \geq 14$. From Lemmas \ref{2.10}, \ref{2.11}, \ref{2.12} and Inequality \eqref{3.1}, it follows that 
\begin{equation}\label{3.4}
\text{log}~B_n > \frac{B_n}{\phi(B_n)} > \frac{\sigma_k(B_n)}{{B_n}^k}  \geq \frac{B_{\sigma_k(n)}}{{B_{n^k}}}>B_{\sigma_k(n)-n^k}.
\end{equation}
Since $\alpha^n > B_n > \alpha^{n-1}$ by Lemmas \ref{2.8}, \ref{2.9} and $\sigma_k(n)-n^k \geq \sqrt{n^k}$, it follows that 
\begin{equation}\label{3.5}
n \text{log}~\alpha > \text{log}~B_n > B_{\sigma_k(n)-n^k} > B_{\sqrt{n^k}} \geq \alpha^{\sqrt{n^k}-1}.
\end{equation}
Further, since $\sqrt{n^k} \geq n$ for $k \geq 2$, Inequality \eqref{3.5} gives
\begin{equation}\label{3.6} 
n \text{log}~\alpha > \alpha^{n-1},
\end{equation}
which holds only when $n=1$, contradicting $n\geq 14$. Thus, the only possibility left is $k=1$. But $k=1$ implies $$n \text{log}~\alpha > \alpha^{\sqrt{n}-1}$$ 
which is true for $n < 5$, which again contradicts  $n \geq 14$. Hence Inequality \eqref{3.1} doesn't hold for any composite number. Hence $n$ is prime.

Let $n$ be any odd prime. From  Lemma \ref{2.14}, it follows that
\begin{align*}
n\text{log}~\alpha & > \text{log}~B_n\\ & \geq \sum_{i=1}^t \text{log}~p_i\\ & \geq t~\text{log}(2n-1)\\  &> 2(n-1)\text{log}(2n-1)\text{log}~5 
\end{align*}
Hence,
\begin{equation*}
\frac{n \text{log}~\alpha}{2(n-1)\text{log}(2n-1)}-\text{log} ~5 >0
\end{equation*}
which does not hold for any odd prime $n$. Hence $\sigma_k(B_n)\leq B_{\sigma_k(n)}$ for all natural numbers $k$ and odd primes $n$. For $n=2$, we need to show that $\sigma_k(B_2)= \sigma_k(6)=1+2^k+3^k+6^k\leq B_{1+2^k}$. It is sufficient to prove that $4\cdot 6^k < B_{1+2^k}$. Since $2k+2 \leq 2^k$ for all natural number $k \geq 3$, it follows that
$$4\cdot 6^k = 2^{k+2}3^k < \alpha^{2k+2} \leq \alpha^{2^k} <B_{1+2^k}$$
and for $k=1,2$, one can easily check that $\sigma_k(B_2) < B_{1+2^k}$.  This completes the proof.
\end{proof}
In the following theorem, we present an inequality involving another arithmetic function namely the tau function of balancing numbers. We denote the number of distinct prime divisors of $B_n$ by $\omega(B_n)$.

\begin{theorem}\label{3.2}
For any natural number $n$,  $\tau(B_n) > B_{\big\lfloor{\frac{\tau(n)}{3}}\big\rfloor}$, where $\lfloor\cdot\rfloor$ denote the floor function. 
\end{theorem}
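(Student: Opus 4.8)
The plan is to bound the number of distinct prime divisors $\omega(B_n)$ from below in terms of $\tau(n)$, turn this into a lower bound for $\tau(B_n)$, and then beat $B_{\lfloor \tau(n)/3\rfloor}$ by elementary growth estimates in which the exponent-constant $3$ arises naturally from $\alpha<8$.

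First I would show that $\omega(B_n)\ge \tau(n)-1$. For every divisor $d$ of $n$ with $d>1$ we have $B_d\mid B_n$ by Lemma \ref{2.2}, and by Lemma \ref{2.5} the number $B_d$ has a primitive prime divisor $p_d$; since $d\mid n$, this prime also divides $B_n$. The crucial point is that distinct divisors yield distinct primes: a primitive prime of $B_d$ divides $B_d$ but no $B_m$ with $0<m<d$, i.e.\ its rank of apparition (the least index $m$ with $p\mid B_m$) equals $d$, so two divisors $d\neq d'$ cannot share a primitive prime. As $n$ has exactly $\tau(n)-1$ divisors exceeding $1$ (and $B_1=1$ contributes no prime), this produces $\tau(n)-1$ pairwise distinct prime factors of $B_n$, whence $\omega(B_n)\ge \tau(n)-1$.

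Next, writing the canonical decomposition $B_n=p_1^{\gamma_1}\cdots p_t^{\gamma_t}$ with $t=\omega(B_n)$, each $\gamma_i\ge1$ gives $\tau(B_n)=\prod_{i=1}^t(\gamma_i+1)\ge 2^{t}=2^{\omega(B_n)}\ge 2^{\tau(n)-1}$. It then suffices to prove $2^{\tau(n)-1}>B_{\lfloor \tau(n)/3\rfloor}$, and here the factor $3$ becomes transparent. Since $\alpha=3+2\sqrt2<8$, we have $\alpha^{1/3}<2$; using the bound $B_m<\alpha^m/(4\sqrt2)$ (visible in the proof of Lemma \ref{2.9}) together with $s=\lfloor\tau(n)/3\rfloor\le \tau(n)/3$, I obtain $B_s<\alpha^{s}/(4\sqrt2)\le \alpha^{\tau(n)/3}/(4\sqrt2)<8^{\tau(n)/3}/(4\sqrt2)=2^{\tau(n)-5/2}<2^{\tau(n)-1}$. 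The only remaining cases are $\tau(n)\le2$, where $s=0$ and $B_0=0<\tau(B_n)$ trivially; if one prefers to use only the stated form $B_m<\alpha^m$ of Lemma \ref{2.9}, the same conclusion follows for $\tau(n)\ge7$ after taking logarithms, the finitely many values $\tau(n)\le6$ being checked directly.

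I expect the lower bound $\omega(B_n)\ge\tau(n)-1$ to be the main obstacle; everything downstream is routine estimation. This step rests on two ingredients: the existence of a primitive divisor for \emph{every} index $d>1$ (Lemma \ref{2.5}, which already disposes of the potential exceptions $d\in\{2,6,12\}$), and the uniqueness of the rank of apparition, which is precisely what forces the primes $p_d$ to be pairwise distinct. Once this combinatorial input is secured, the numerical inequality $\alpha<8$ is exactly the fact that makes the $3$ in the exponent admissible, and the theorem follows.
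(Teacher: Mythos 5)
Your proof is correct and takes essentially the same route as the paper: count primitive prime divisors of $B_d$ over the divisors $d$ of $n$ to bound $\omega(B_n)$ from below by roughly $\tau(n)$, deduce $\tau(B_n)\ge 2^{\omega(B_n)}$, and then beat $B_{\lfloor\tau(n)/3\rfloor}$ using the fact that balancing numbers grow slower than $8^m$ (you use $\alpha<8$ with $B_m<\alpha^m/(4\sqrt{2})$, while the paper uses $B_m\le 6^{m-1}<2^{3m-3}$). If anything, your count $\omega(B_n)\ge\tau(n)-1$, which correctly discards the divisor $d=1$ since $B_1=1$ contributes no prime, is slightly more careful than the paper's stated bound $\omega(B_n)\ge\tau(n)$, and the slack in the exponent absorbs the off-by-one in either version.
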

\begin{proof}
Let $n$ be a natural number. By virtue of Lemma \ref{2.2}, corresponding to each divisor $m$ of $n$, there exist a primitive divisor of $B_m$ which divides $B_n$ and hence the number of distinct prime divisors of $B_n$ is at least the total number of divisors of $n$, i.e, $\omega(B_n)\geq \tau(n)$ for $n>1$. For each natural number $n$, it is easy to see that $\tau(n)\geq2^{\omega(n)}$. Thus,
\begin{equation}\label{3.7}
\tau(B_n) \geq 2^{\omega(B_n)} \geq 2^{\tau(n)}.
\end{equation}
Since for each natural number $n$, $B_n \leq 6^{n-1}< 8^{n-1}=2^{3n-3}$, it follows that 
\begin{equation}\label{3.8}
B_{\big\lfloor{\frac{n}{3}}\big \rfloor} <2^{n-3},
\end{equation}
Now, from Inequality \eqref{3.7}, we have 
\begin{equation}\label{3.9}
\tau(B_n) \geq 2^{\tau(n)} > 2^{\tau(n)-3}> B_{\big \lfloor{\frac{\tau(n)}{3}}\big \rfloor}.
\end{equation}
This completes the proof.
\end{proof}

\begin{center}
ACKNOWLEDGMENTS
\end{center}

It is our pleasure to thank the anonymous referee for his comments and suggestion that significantly improved the accuracy and presentation of this paper.

\medskip

\noindent MSC 2010: 11B39.

\end{document}